\newtheorem{thm}{Theorem}       
       \newtheorem{coro}{Corollary}
\let\paragraph\subsection
\title{On the cohomology of measurable sets}
\author{Oliver Knill}
\date{7/23/2023}
\address{Department of Mathematics \\ Harvard University \\ Cambridge, MA, 02138 }
\begin{document}

\begin{abstract}
If $T$ is an ergodic automorphism of a Lebesgue probability space $(X,\mathcal{A},m)$,
the set of coboundries $\mathcal{B} = dB =T(B)+B$ with symmetric difference $+$ form a subgroup 
of the set of cocycles $\mathcal{A}$. Using tools from descriptive set theory, Greg Hjorth showed in 1995 
that the first cohomology group $\mathcal{H}=\mathcal{A}/\mathcal{B}$ is uncountable. This can surprise,
given that in the case of a finite ergodic probability space, one has $\mathcal{H}=\mathbb{Z}_2$. 
Hjorth's proof used descriptive set theory in the complete metric space $(\mathcal{A},d(A,B)=m(A+B))$,
leading to the statement that $\mathcal{B}$ is meager in $\mathcal{A}$. We use a spectral genericity 
result of Barry Simon to establish the same. It leads to the statement noted first by Karl Petersen in 1973 
that for a generic $A$, the induced system $T_A$ is weakly mixing, which is slightly stronger than a 
result of Nate Friedman and Donald Ornstein about density of weakly mixing in the space of all 
induced systems $T_A$ coming from an ergodic automorphism $T$. 
\end{abstract}

\maketitle

\section{Cohomology of measurable sets}

\paragraph{}
This is a remark on a spectral and cohomological aspect in classical ergodic theory 
\cite{Friedman,Hal56,CFS,DGS,Petersen1983,Nadkarni,DeLe92,Queffelec,KH}.
A {\bf measure preserving invertible} map $T$ of a standard non-atomic {\bf Lebesgue 
probability space} $(X,\mathcal{A},m)$ defines an automorphism of the 
$\sigma$-algebra $\mathcal{A}$ which preserves measure $m(T(A))=m(A)$ for $A \in \mathcal{A}$. 
The map $T$ is called {\bf ergodic} if $0=\emptyset$ and $X$ are the only fixed points of $T$. 
The automorphism $T$ defines a group translation on the additive Abelian and Boolean 
group $(\mathcal{A},+)$. 
The set of {\bf coboundaries} $\mathcal{B}=\{ dB=B(T)+B=B(T)-B \}$ form a subgroup of $\mathcal{A}$. 
If one thinks of $\mathcal{A}$ as {\bf cocycles}, then $\mathcal{H}=\mathcal{A}/\mathcal{B}$,
the set of cocycles modulo coboundaries is the {\bf first cohomology group} of the 
dynamical system with coefficient group $\mathbb{Z}_2$. 
This coefficient group has first been considered in \cite{Kir} in the context of 
representation theory. 
Both $\mathcal{A}$ and $\mathcal{B}$ share the cardinality of the continuum so that 
the cardinality of $\mathcal{H}$ is not obvious. 
With the metric $d(A,B)=m(A + B)$, the set $\mathcal{A}$ becomes a {\bf Polish space},
a separable, metrizable and complete topological group.
The standard assumption is always that $d(A,B)=0$ is equivalent to $A=B$. Sets $A,B$
which are the same up to a set of measure zero are identified. 

\paragraph{}
We assume that the probability space is {\bf non-atomic} because if $T$ induces a 
permutation of a finite set $X$,
then $\mathcal{H}^1 = \mathcal{Z}_2^k$, where $k$ is the number of ergodic
components. Ergodicity is assumed because one can reduce the 
problem in the non-ergodic case by ergodic decomposition
\cite{Rohlin66,DGS}. The assumption to have a Lebesgue space is important but not 
really much of a restriction: 
any Borel $\sigma$-algebra of a complete separable metric space $(X,d)$ and so virtually
any probability space that appears in applications is a Lebesgue space. 
Without the Lebesgue assumption, the cohomology can be trivial, as was
first pointed out in \cite{AkCh65}, answering a question of Paul Halmos.

\paragraph{}
The construction of concrete non-coboundaries is in general not easy.
Under some circumstances,it is possible to give non-coboundaries: for example, if 
$T^2$ is ergodic, then the full space $X$ itself is not a coboundary. 
This is important for example because the {\bf set complement operation} $A \to A+X$ preserves
coboundaries if and only if $X$ is a coboundary. 

\paragraph{}
If we have ergodicity for $T,T^2$, then $X$ is not a coboundary and any $A$ which tiles
the space using transformations commuting with $T$ can not be a coboundary.
Therefore, if $X$ is a compact Abelian group with Haar measure $m$ and $Y$ is a finite 
subgroup, then $X/Y$ defines a fundamental domain $A$ which satisfies 
$ X= \{ y+A, y \in Y \}$. If $T$ is an ergodic translation on $X$
such that $T^2$ is ergodic, then $A$ can not be a coboundary. 
In the case $X=\mathbb{T}^d$ with irrational rotation for example, 
there is a countable set of explicit non-coboundaries. For $T(x)=x+\alpha$ on 
$\mathbb{T}^d$, every non-empty union of rational rectangles 
$\prod_{j=1}^d [k_j/n,(k_j+1)/n]$ can not be a coboundary. 

\paragraph{}
It is an interesting question to inquire about the structure of the group 
$\mathcal{H}$. Are there some interesting invariants which one can distill 
from $\mathcal{H}$ that allow to distinguish dynamical systems? This is not so clear.
$\mathcal{A}/\mathcal{B}$ is a topological group again it is not complete.
As it is Boolean and $A+A=0$, it only contains finite subgroups of order $\mathbb{Z}_2^k$.
We asked in \cite{Kni93diss} about the cardinality of 
$\mathcal{H}$ and whether this could depend on the dynamical system $T$. Hjorth
\footnote{Gred Hjorth (1963-2011) was an instructor at Caltech 1994-1995}
showed me the proof of the following result:

\begin{thm}
For any ergodic automorphism $T$ on a Lebesgue space, $\mathcal{H}$ is uncountable.
\end{thm}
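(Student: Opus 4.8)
The plan is to prove the stronger assertion that $\mathcal{B}$ is meager in the Polish group $(\mathcal{A},d)$ and then deduce uncountability of $\mathcal{H}$ from the Baire category theorem. For the deduction: suppose $\mathcal{H}=\mathcal{A}/\mathcal{B}$ were countable and choose coset representatives $\{A_i\}_{i\in I}$ with $I$ countable, so $\mathcal{A}=\bigcup_{i\in I}(A_i+\mathcal{B})$. Since $d(A_i+C,A_i+C')=m(C+C')=d(C,C')$, translation by $A_i$ is an isometry of $\mathcal{A}$, so every coset $A_i+\mathcal{B}$ is meager once $\mathcal{B}$ is. Then $\mathcal{A}$ would be a countable union of meager sets, contradicting that a nonempty complete metric space is non-meager in itself. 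So the whole problem reduces to showing $\mathcal{B}$ is meager.

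To place $\mathcal{B}$ inside a small set I would first record a dictionary entry relating coboundaries to induced transformations. Let $A=dB=B+T(B)$ with $m(A)>0$. Along an orbit put $b_n=1_B(T^n x)$; from $1_{T(B)}(y)=1_B(T^{-1}y)$ we get $1_A(T^n x)=b_n+b_{n-1}\pmod 2$, so the return times of $x$ to $A$ are exactly the times at which the $B$-process $(b_n)$ changes value. Hence, for a.e.\ $x\in A$, the value of $1_B$ at the next return of $x$ to $A$ equals $1-1_B(x)$; that is, the first-return map $T_A$ satisfies $1_B\circ T_A=1-1_B$ a.e.\ on $A$. A short measure-preservation argument shows $0<m(A\cap B)<m(A)$ for every nonempty coboundary, so the function $(-1)^{1_B}$ restricted to $A$ is a non-constant eigenfunction of $T_A$ with eigenvalue $-1$; in particular $T_A$ is not weakly mixing. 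Therefore
\[
 \mathcal{B}\ \subseteq\ \{\emptyset\}\ \cup\ \{A\in\mathcal{A}:\ m(A)>0,\ T_A\text{ is not weakly mixing}\}.
\]

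It remains to show the right-hand side is meager. The singleton $\{\emptyset\}$ is nowhere dense (a non-atomic measure algebra has no isolated points), so it suffices that $\mathcal{W}=\{A:\ m(A)>0,\ T_A\text{ weakly mixing}\}$ is residual in the open set $U=\{m(A)>0\}$, residuality in the open subspace $U$ propagating to meagerness of the complement in $\mathcal{A}$. This is Petersen's 1973 observation, which I would obtain through Simon's spectral genericity scheme: (i) transport the problem to a fixed Hilbert space by representing $T_A$ via the Koopman unitary of the Rokhlin tower over $A$, checking that $A\mapsto U_{T_A}$ is strongly continuous on each $\{m(A)\ge\delta\}$; (ii) observe that ``$T_A$ has no eigenvalues off the constants'', i.e.\ $T_A$ weakly mixing, is $G_\delta$, using the standard fact that the unitaries without eigenvalue form a $G_\delta$ in the strong topology; (iii) prove density of $\mathcal{W}$ in $U$ — equivalently, density of weakly mixing transformations among the induced systems $T_A$ — which is the Friedman--Ornstein density theorem, or a direct Rokhlin-tower perturbation of $A$. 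A Wonderland-type ``$G_\delta$ plus dense'' argument then makes $\mathcal{W}$ residual in $U$, and combined with the display and the reduction this proves the theorem.

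I expect the main obstacle to be step (iii) together with the bookkeeping in (i): because the base of the tower, and its measure $m(A)$, vary with $A$, turning ``$T_A$ is weakly mixing'' into an honest $G_\delta$ subset of $\mathcal{A}$ needs a uniform model, handled by exhausting $U$ by the closed sets $\{m(A)\ge\delta\}$; and the real content — that an arbitrary measurable set can be perturbed slightly in the metric $d$ so as to make its induced transformation weakly mixing, for \emph{every} ergodic $T$, including ones with discrete spectrum — is where a genuine construction is unavoidable. One can also package the endgame more softly: $\mathcal{B}=d(\mathcal{A})$ is an analytic subgroup of the path-connected Polish group $\mathcal{A}$, so by Pettis' theorem it is either meager or open, and an open subgroup of a connected group is everything; thus it would be enough to exhibit a single non-coboundary, and by the lemma any $A$ with $T_A$ ergodic and $-1\notin\sigma_{\mathrm{pp}}(U_{T_A})$ — e.g.\ any weakly mixing $T_A$ — qualifies.
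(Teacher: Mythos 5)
Your proposal is correct and follows essentially the same route as the paper: Simon's Wonderland theorem plus Friedman--Ornstein density makes weak mixing of $T_A$ residual, the coboundary-to-eigenvalue-$(-1)$ dictionary places $\mathcal{B}$ inside the meager complement, and the Baire category theorem converts meagerness of $\mathcal{B}$ into uncountability of $\mathcal{H}$. You in fact spell out two steps the paper leaves implicit --- the dictionary $1_B\circ T_A=1-1_B$ (which the paper only cites, as Lemma 3.2 of \cite{Kni91}, in the corollary) and the final translation-isometry argument --- so your write-up is, if anything, more complete than the paper's own.
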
 

\paragraph{}
As a corollary, one knows in more generality that $\mathcal{H}^1(T,U)$ is uncountable, 
if $U$ is a compact topological group with normal subgroup $\mathbb{Z}_2$.
One can either take the proof and generalize it or then write an element as 
$B=\pm k$ with $k \in K=U/\mathbb{Z}_2$. If $A$ is a $\mathbb{Z}_2$ cocycle and 
$A=B(T) B^{-1}$ then $B$ can be chosen to be $\mathbb{Z}_2$-valued. It follows that
$\mathcal{H}^1(T,U)$ is uncountable. 

\paragraph{}
The proof of Hjorth already used a Baire category argument. Here is a new proof
using a result developed in the context of mathematical physics:

\begin{proof} 
Because of the Lebesgue assumption, one can assume that $X=[0,1]$ and $m=dx$ is the 
Lebesgue measure. 
We can associate to very $A \in \mathcal{A}$ of positive measure the induced
probability measure $\mu(A)=1_A/m(A)$.
Its cumulative distribution function $F_{\mu{A}}(x) = \mu(A)[0,x]$ allows to rewrite
the dynamics of the induced map $T_A$ on $[0,1]$. One has now a unitary Koopman operator $U_A: f \to f(T)$
on $L^2([0,1],dx)$ for every $A$ \cite{Koopman1931}. 
The map $A \to U_A$ is continuous if one takes the strong operator
topology on the space of unitary operators. By a result of Friedman-Ornstein
the set $\{A \; | \;  T_A $ is weakly mixing $\}$ (even mixing) 
is dense in the topological group $\mathcal{A}$. But this means it is dense in the corresponding
set $\mathcal{U} = \{ U_{T_A}, A \in \mathcal{A} \} \subset L^2([0,1],dx)$ of unitary operators, 
which is again a compact topological space in the induced metric. But
since this metric is stronger than the strong operator topology the set $\mathcal{U}$ is also 
closed in the strong operator topology. By the Simon Wonderland theorem \cite{Sim95}, 
the set of operators with no point spectrum on the orthogonal complement of constant functions
must be residual. 
\end{proof}

\paragraph{}
For the spectral theory of unitary operators, see also \cite{ChNa90}, 
about genericity, see \cite{Oxtoby}. As a comparison, here is Hjorth's proof 
(as transcribed in \cite{Knill2000}, any misunderstanding of course would be my mistake):

\begin{proof} 
$T$ defines a continuous map on the Polish group $\mathcal{A}$. 
Coboundaries are a continuous image of the automorphism
$f:A \to A(T)A^{-1}$ which has he kernel $\{-1,1\}$.
So, coboundaries are the injective image of the
automorphism $\mathcal{A}/{\rm ker}(f)$. By the
theorem of Lusin-Souslin (Theorem 15.1 in \cite{Kechris}), it is a
Borel set and so has the Baire property.
The set $\mathcal{C}=im(f)$ is not open and therefore
meager by a theorem of Pettis (Theorem 9.9 in \cite{Kechris}).
The equivalence relation on $\mathcal{A}$ given by
$A \sim B$ is $AC=BC$ is meager \cite{Der93}.
A Theorem of Mycielski (Theorem 19.1 \cite{Kechris}) implies that there are
uncountably many equivalence classes.
\end{proof}

\paragraph{}
The following corollary of the proof is a remark of Karl Petersen in 1973 \cite{Pet73}.
The density of weakly mixing has been a result of Friedman-Ornstein \cite{Friedman}. 
We can upgrade density to residual: 

\begin{coro}
The set of $A$ for which $T_A$ is weakly mixing is residual.
\end{coro}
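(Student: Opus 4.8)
The plan is to identify the set $W=\{A\in\mathcal{A}:m(A)>0\text{ and }T_A\text{ is weakly mixing}\}$ with a dense $G_\delta$ in the Polish --- hence Baire --- space $\mathcal{A}$; residuality then follows at once, and the single point $A=\emptyset$ is nowhere dense and may be ignored. I would first set up two standard inputs. Since $T$ is ergodic, the induced map $T_A$ is ergodic by Kakutani's theorem on induced transformations, and for an ergodic transformation weak mixing is equivalent to the Koopman operator having no eigenvalue other than the simple eigenvalue $1$, i.e. to purely continuous spectrum on the subspace $H_0$ of $L^2$ orthogonal to the constants \cite{Hal56,CFS}. Hence $W=\Phi^{-1}(\mathcal{G})$, where $\Phi(A)=U_{T_A}$ is the continuous map from the proof of Theorem 1 and $\mathcal{G}=\{U\in\mathcal{U}(H_0):U\text{ has purely continuous spectrum}\}$.

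First I would establish that $W$ is dense in $\mathcal{A}$: this is exactly the Friedman--Ornstein theorem \cite{Friedman}, that the $A$ for which $T_A$ is weakly mixing --- indeed mixing --- are dense in $\mathcal{A}$; no transport through $\Phi$ is needed here. Next I would show that $W$ is a $G_\delta$. The key input is that $\mathcal{G}$ is a genuine $G_\delta$ subset of the unitary group $\mathcal{U}(H_0)$ in the strong operator topology --- the classical fact underlying the Wonderland theorem \cite{Sim95}: by Wiener's lemma the spectral measure $\mu_\psi$ of a vector $\psi$ is non-atomic if and only if $\frac1N\sum_{n=1}^{N}|\langle\psi,U^n\psi\rangle|^2\to 0$ as $N\to\infty$, and running this criterion over a countable dense set of $\psi\in H_0$ (together with the fact that the continuous subspace is closed) exhibits $\mathcal{G}$ as a $G_\delta$ in the strong operator topology. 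Since $\Phi$ is continuous from $\mathcal{A}$ into $\mathcal{U}(H_0)$ with that topology, the preimage $W=\Phi^{-1}(\mathcal{G})$ is a $G_\delta$ in $\mathcal{A}$. Combining the two steps, $W$ is a dense $G_\delta$, i.e. residual.

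The delicate point, and the one place a careless argument would derail, is precisely keeping these two steps apart. It is tempting to pull the residuality of $\{U\in\mathcal{U}:U\text{ has no point spectrum on }H_0\}$ obtained in the proof of Theorem 1 back through $\Phi$, but that would require $\Phi$ to be an open map, which is not evident. Instead the $G_\delta$ property has to be anchored at the level of unitary operators in the strong operator topology --- where it is an ambient $G_\delta$ pulled back by a continuous map --- while density is supplied independently and directly in $\mathcal{A}$ by Friedman--Ornstein. The only remaining technical point is the continuity of $A\mapsto U_{T_A}$, which is asserted and used already in the proof of Theorem 1, and the harmless observation that one may work from the start on the open dense set $\{A:m(A)>0\}$ where $T_A$ is defined.
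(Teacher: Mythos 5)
Your proof is correct and follows essentially the same route as the paper: density of weak mixing from Friedman--Ornstein combined with the $G_\delta$ nature of the ``no point spectrum on $H_0$'' condition in the strong operator topology, which is precisely the content of Simon's Wonderland theorem that the paper invokes as a black box in the proof of Theorem~1. The only difference is cosmetic --- you inline the Wiener-lemma proof of the $G_\delta$ statement and phrase the transfer as taking the preimage under the continuous map $A\mapsto U_{T_A}$, rather than applying the Wonderland theorem to the metric space of operators with the pulled-back metric.
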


\begin{proof}
Lemma 3.2 in \cite{Kni91} notes that $T_A^2$ is ergodic if and only if $A$ is not a coboundary.
Having no pure point spectrum on the ortho-complement of the constant functions, 
is equivalent to weak mixing.  
The set of sets $A$ for which the induced map $T_A$ is weakly mixing, is residual. 
So, for a generic measurable set $A$ of positive measure, the induced map 
$T_A$ is ergodic. 
Because weakly mixing is residual and weakly mixing implies ergodic, this especially means also
that $T_A$ is again ergodic for generic $A$ if $T$ was ergodic. 
\end{proof}

\paragraph{}
Let us also reprove also one of the first genericity results \cite{Anz51}:

\begin{coro}[Anzai]
The set of $A$ for which $T \times A$ is weakly mixing is residual.
\end{coro}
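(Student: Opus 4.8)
The plan is to read $T \times A$ as the two-point group extension $\widehat{T}_A\colon X \times \mathbb{Z}_2 \to X \times \mathbb{Z}_2$, $(x,j) \mapsto (Tx,\, j + 1_A(x))$, i.e. the $\mathbb{Z}_2$-skew product attached to the cocycle $1_A$ --- the object Anzai \cite{Anz51} studied in the torus case --- and to run the same two ingredients (a Wonderland $G_\delta$ plus a density statement) used for the Theorem. Splitting $L^2(X \times \mathbb{Z}_2)$ along the two characters of $\mathbb{Z}_2$, the Koopman operator of $\widehat{T}_A$ is unitarily $U_T \oplus V_A$ on $L^2(X) \oplus L^2(X)$, where $V_A\colon f \mapsto (-1)^{1_A}\, f\circ T$ is the twisted Koopman operator, so $\widehat{T}_A$ is weakly mixing exactly when $T$ is weakly mixing and $V_A$ has no eigenvalues. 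The content of the corollary is this last ``Anzai'' condition --- purely continuous spectrum transverse to the base --- which is meaningful for every ergodic $T$, and I would phrase the statement that way.

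First I would check that $A \mapsto V_A$ embeds $(\mathcal{A},d)$ homeomorphically into the unitary group of $L^2(X)$ with the strong operator topology: strong continuity is dominated convergence, $\|V_A f - V_B f\|^2 = 4\int_{A+B} |f\circ T|^2\, dm \to 0$ as $d(A,B) \to 0$; and $V_A \mathbf{1} = (-1)^{1_A}$ reads off $A$, giving injectivity and a continuous inverse. As in the proof of the Theorem the image is then closed in the strong topology, so the Simon Wonderland theorem \cite{Sim95} gives that the operators in it with no point spectrum form a $G_\delta$, hence $\{A : V_A \text{ has no eigenvalue}\}$ is a $G_\delta$ in $\mathcal{A}$.

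For density --- equivalently, that the complement of that set is meager --- I would use a cocycle cohomology lemma. If $V_A f = \lambda f$ with $f \neq 0$, then $|f|$ is $T$-invariant, hence a nonzero constant, and $f^2$ is then a $U_T$-eigenfunction with eigenvalue $\lambda^2$; so $\lambda$ can occur only when $\lambda^2$ lies in the point spectrum $\Sigma$ of $T$, which is countable since $T$ is ergodic. For each admissible $\lambda$, fixing a measurable square root of a (simple, again by ergodicity) eigenfunction of $U_T$ at $\lambda^2$ produces a set $A_\lambda$ with $\lambda$ an eigenvalue of $V_{A_\lambda}$; and if $\lambda$ is an eigenvalue of both $V_A$ and $V_B$, simplicity of the $U_T$-eigenvalue $\lambda^2$ forces $(-1)^{1_A}$ and $(-1)^{1_B}$ to differ by a multiplicative coboundary, i.e. $A + B \in \mathcal{B}$. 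Hence $\{A : \lambda \text{ is an eigenvalue of } V_A\}$ is empty or a single coset $A_\lambda + \mathcal{B}$. Since $\mathcal{B}$ is meager --- this is exactly the Theorem --- and there are only countably many admissible $\lambda$, the set $\bigcup_{\lambda^2 \in \Sigma}(A_\lambda + \mathcal{B})$ is meager, so $\{A : V_A \text{ has no eigenvalue}\}$ is a comeager $G_\delta$, i.e. residual. When $T$ is itself weakly mixing one has $\Sigma = \{1\}$, the only cosets in play are $\mathcal{B}$ and $X + \mathcal{B}$, and the set of $A$ with $\widehat{T}_A$ weakly mixing is residual.

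The real obstacle I anticipate is not the Baire-category bookkeeping but pinning down the cohomology lemma: that an eigenvalue of $V_A$ forces $\lambda^2 \in \Sigma$, and, crucially, that the ``bad'' $A$ form exactly single cosets of $\mathcal{B}$ rather than something larger. Related to this is the modeling question of what ``$T \times A$ weakly mixing'' should mean, since literal weak mixing of the extension already forces $T$ to be weakly mixing, so one has to work with the spectrum transverse to the base --- which is where Anzai's theorem and the Friedman--Ornstein density \cite{Friedman} genuinely live. Once the lemma is in hand, countability of $\Sigma$ and meagerness of $\mathcal{B}$ close the argument, and the Wonderland step becomes optional, though keeping it parallels the proofs above.
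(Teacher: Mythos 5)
Your proof is correct, but it takes a genuinely different --- and substantially more complete --- route than the paper. The paper disposes of the corollary in one line: it invokes Stepin's criterion that $T \times A$ is ergodic on $X \times \mathbb{Z}_2$ if and only if $A$ is not a coboundary, and implicitly combines this with the meagerness of $\mathcal{B}$ established for the Theorem. In the character decomposition you set up, Stepin's criterion is exactly the statement that $1$ is an eigenvalue of the twisted operator $V_A$ iff $A \in \mathcal{B}$; so the paper's argument only controls the eigenvalue $1$, i.e.\ only yields \emph{ergodicity} of the extension for residual $A$, not weak mixing. Your coset argument --- $V_A f = \lambda f$ forces $|f|$ constant and $f^2$ a $U_T$-eigenfunction, hence $\lambda^2 \in \Sigma$; simplicity of eigenvalues of an ergodic $U_T$ pins the bad set for each admissible $\lambda$ inside a single coset $A_\lambda + \mathcal{B}$ (since $f\bar g$ is two-valued and $(-1)^{1_A+1_B}$ is then a multiplicative coboundary); countably many meager cosets stay meager --- is precisely the missing bookkeeping over the full countable candidate point spectrum, and the computations (including $\|V_A f - V_B f\|^2 = 4\int_{A+B}|f \circ T|^2\,dm$ and the existence of the sets $A_\lambda$ via measurable square roots) all check out. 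You are also right to flag the modeling issue: literal weak mixing of $T \times A$ forces $T$ itself to be weakly mixing, so for a general ergodic $T$ the corollary can only mean that $V_A$ has no point spectrum (continuous spectrum transverse to the base); the paper does not address this. What the paper's route buys is brevity by outsourcing to Stepin; what yours buys is an actual proof of the weak-mixing rather than the ergodicity statement, self-contained except for the meagerness of $\mathcal{B}$. One small caveat: the Theorem's \emph{statement} is that $\mathcal{H}$ is uncountable, while the meagerness of $\mathcal{B}$ is what its proofs establish en route (via Pettis, or via the Wonderland argument), so you should cite that step of the proof rather than the Theorem itself.
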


\begin{proof}
Anatoly Stepin \cite{Ste87} showed that $T \times A$ is ergodic on $X \times \mathbb{Z}_2$ 
if and only if $A$ is not a coboundary. 
\end{proof} 

\section{Remarks}

\paragraph{}
William Veech asked whether if $T$ is an irrational rotation of the circle, 
the set of $A \in \mathcal{A}$ for which $T_A$ has pure point spectrum is dense (see \cite{Pet73} p. 
229 and \cite{Con72}). If that were the case, then by the Wonderland theorem, we
would have a generic set of $A$ such that $T_A$ has {\bf purely singular continuous spectrum}.
We mentioned this Veech question in an earlier
version \cite{Kni97Preprint} to \cite{Kni97}, a document which focused on genericity alone while
\cite{Kni97} also contained quantitative mixing scales. The question of Veech can be asked for
any $T$ with pure point spectrum. We expect in this case that for a generic $A$ the system 
$T_A$ has purely singular continuous spectrum.  The interest of Veech in 1969 was in the context
of Sturmean sequences. If $A=[0,\beta]$ one gets a new dynamical system $T_A$ 
from the irrational rotation $T(x)=x+\alpha \; {\rm mod \; 1}$. 

\paragraph{}
The definition of an induced system was given by Kakutani in 1943 \cite{kakutani,Pet73}. It is a measurable
analog of a Poincar\'e return map. 
The induced system $T_A$ can upgrade ergodic to weakly mixing and so tends to make the dynamics
more chaotic. If the {\bf Kolmogorov-Sinai entropy} $h(T)$ (introduced in 1958 by Kolmogorov \cite{Rokhlin1967}
and refined by Sinai later) of $T$ is positive, then the entropy $h(T_A) = h(T)/m(A)$ of $T_A$
is larger {\bf formula of Abramov}. The case of Bernoulli shifts shows that $T_A$ can be mixing.
A theorem of Sinai assures that that every $T_A$ has a Bernoulli shift as a factor. By 
a result of Rohlin, there is then some absolutely continuous spectrum of infinite multiplicity.
Already Rohlin knew however that the set of transformations with pure absolutely continuous spectrum
is meager both in the strong and weak topology of measure preserving trasformations.

\paragraph{}
To illustrate the difficulty of cocycles and coboundaries, here is a challenge:
assume we look at the Arnold cat map $T(x,y) = (2x+y,x+y)$ on $X=\mathbb{T}^2$ which is
measure preserving and hyperbolic. Because also $T^2$ is ergodic, the entire torus 
$X$ is not a coboundary. Is $Y=[0,1/2] \times [0,1/2]$ a coboundary or not? 
More generally, which of the sets $Y_{a,b}=[0,a] \times [0,b]$ are coboundaries? 
A measurable set $Z$ such that $T(Z) + Z$ is $Y_{a,b}$ is expected to be very complicated. 

\paragraph{}
If $T$ is a permutation of a finite set $X$, a subset $A$ can serve as a jump switch to
toggle the two branches. If $T$ on $X$ has one cycle then the doubled system either has
one or two cycles. If $A$ has an even number of elements, then we have two branches. 
If $A$ is odd, we go round and reach after $n$ steps the other side meaning that we cover
all of the two branches.

\paragraph{}
Anatoly Stepin \cite{Ste87} first showed that $A$ is not a coboundary if and only if the cocycle action 
$T \times A$ is ergodic on $X \times \mathbb{Z}_2$. We used this to show that switching
stable and unstable directions of a skew flow does not necessarily kill Lyapunov exponents.
We proved in \cite{Kni91} that if we had an oracle that determines the Lyapunov exponents of 
$SL(2,\mathbb{R})$ cocycles, then we could decide whether a set is a coboundary or not. 
The problem of deciding whether $Sl(2,\mathbb{R})$ cocycle Lyapunov exponents are positive or
not is at least as hard as the coboundary problem. 

\paragraph{}
A {\it conjecture of Kirillov} asks whether two automorphisms $T,S$  with the same coboundaries 
are isomorphic. 
In the finite periodic case, Kirillov's question is negative because conjugation requires
that the orbits of the cycles have the same length.  We are not aware that the Kirillov question
has been answered. The question can even be asked in subclasses. If two irrational rotations
$x \to x+\alpha, x \to x+\beta$ have the same coboundaries. Is $\alpha=\beta$? 

\paragraph{}
Related to the cohomology problem is a more general question in group theory which goes
back to Halmos. Let $G$ be an arbitrary Abelian group and $T$ a group automorphism.
One has then the {\bf Halmos cohomology group}
$H(G,T)=G/dG$, where $dG = \{g(T)g^{-1} \; | \; g \in \mathcal{G} \}$. 
One can then define $H^k(G,T) = d^{k-1} G/d^k G$ for any $k \in \mathbb{Z}$. 
I wondered once whether $H^k(G,T)=H^{-k}(G,T)$ but has been answered negatively by 
an example of Ashbacher. 

\paragraph{}
The thesis of Jerome Depauw \cite{DeP94} looked at the second cohomology groups
\footnote{Jack Feldman informed me about de Pauw's work in 1994}. 
Such higher cohomology groups were defined independently also in \cite{Kni93diss}, as well as in 
work of Katok. The idea is simple: replace
the usual partial derivatives $\partial_j f$ of a function of several variables 
with the dynamical derivative $f(T_j)-f$. Let us only look at the two-dimensional case $d=2$, where 
$S,T$ be two commuting ergodic measure preserving transformations of the probability space. 
The standard assumption is that the 
action is {\bf free} meaning that the set of fixed points of $S^n T^m$ are of measure 
zero if $(n,m) \neq (0,0)$ Cocycles are vector fields in $\mathcal{A}$ for which the curl is zero. 
This means we have pairs $(P,Q)$ in the $\sigma$-algebra
such that the curl, the line integral along a basic plaquette in $\mathbb{Z}^2$ is zero. This means
$Q_x-P_y = P + Q(S) + P(T) + Q=0$. Coboundaries $\mathcal{B}$ are sets of the 
form $(P,Q) =[C(S) + C, C(T) + C]$. 
The curl of the gradient is zero. The first cohomology group 
are the equivalence classes of all
2-forms  modulo all 1-forms which are coboundaries. This can be generalized to cohomology groups 
$\mathcal{H}^p(T,U)$ with other coefficient groups $U=\mathbb{Z}_2$.
There is no interesting higher cohomology by a theory of Feldman and Moore \cite{FeMo77}. 
In the 2-dimensional case, this has also been treated by Depauw \cite{DeP94}.
For $d \geq 2$, the group $H^d(T,U)$ is trivial if $T$ is a free and $U$ is an Abelian Polish group.

\paragraph{}
There are no cohomological constraints in higher dimensions. 
The De Rham type cohomology is equivalent to a simplicial type group cohomology
of Eilenberg-McLane and the cohomology of ergodic equivalence relations,
as developed Feldman and Moore \cite{FeMo77}.
Douglas Lind showed in 1978 that for a general (not necessarily Abelian) 
Polish group $U$ and any measurable function $F: X \to U$, the 
equation $F(x) = P(T(x)) P(x) Q(S(x)) Q(x)$ can be solved with 
functions $P,Q$ \cite{Lin78}. The proof uses a two-dimensional version of the
{\bf Halmos-Rohlin lemma}, stating that one can approximate the 
$\mathbb{Z}^2$ action by periodic actions up to arbitrary small measure. To summarize:
for non-Abelian Polish $U$, the group $H^2(T,U)$ is trivial if $T$ is a free $\mathbb{Z}^2$ action.
This has interesting consequences in mathematical physics. 
Assume for example we have a lattice $\mathbb{Z}^2$ and
we want to generate a {\bf field} $F_{n,m}(x) =F(S^nx,T^nx)$ on the plaquette at position $(n,m)$ 
given by a two-dimensional stochastic process defined by two commuting transformations $S,T$. 
No decorrelation is assumed.  
The fields can be almost periodic for example like $F_{n,m}(x,y)= f(x+n \alpha + m \beta)$. 
Then, there is a ``lattice gauge field" $(P,Q)$ such that the curl agrees with the vector field, meaning
$F(x) = P(x) Q(Sx) P(Tx)^{-1} Q^{-1}(x)$. The curl is the 
product of the cocycle values around a plaquette. In physics, one sees $(P,Q)$ as a 
{\bf lattice gauge field}, a discrete in general non-abelian 1-form. 
The Feldman-Moore picture of ergodic equivalence relations even shows that one can realize any 
magnetic field process on a Penrose lattice as the curl of a process defined on edges of the tiling. 

\paragraph{} 
The spectral theory of unitary operators $f \to f(T)$ coming from dynamical systems were
first investigated by Koopman and von Neumann \cite{CFS}.
The history of cohomology in ergodic theory is long.
The cohomological equation $f(T)-f=g$ appears at the end of the statement 
of Hilbert's 5th problem \cite{Hil02}. It is an example of an analytic functional equation with 
only continuous but non-differentiable solution. 
Von Neumann's 1932 paper \cite{Neu} (p.641) mentions cohomology 
of dynamical systems in the context of spectra of ergodic flows. 
The term "cohomology of dynamical systems" was coined by Kirillov in \cite{Kir} in 1967.
\footnote{We owe this reference to J.P. Conze, who informed us about this around 1993.}
Other pioneers are Katok and Stepin \cite{KaSt67,KaSt70,Ste71}.
Alexandre Kirillov also already looked at the cohomology of sets $H^1(T,\mathbb{Z}_2)$.
Also the 1972 paper of \cite{Liv} uses cohomology of dynamical systems, but unaware
of \cite{Kir}. \cite{Anz51} used the name "equivalent" for  cohomologuous. 
\cite{Halmos} introduced "generalized eigenvalues".
\cite{AkCh65} continued the work aiming to find invariants for dynamical systems. 
The cohomology of dynamical systems as a special case of group cohomology
was worked out in \cite{FeMo77} in the context context of ergodic equivalence relations. 
See also Chapter 2 of \cite{Schmidt}.

\paragraph{}
The history of what happens generically and especially with respect to mixing is interesting. 
Steven Smale looked in 1967 at the general question of what happens
typically for (a not necessarily conservative) 
diffeomorphisms of a compact manifold \cite{Smale1967,Smale1971}.
Having a dense set of hyperbolic periodic orbits or the existence of homoclinic points, or the
existence of a dense set of periodic elliptic orbits is generic \cite{New75}.
In \cite{Lin78} is something about the history of mixing. 
Eberhard Hopf first defined various notions of mixing including weak mixing. 
A weakly mixing transformation that is not mixing was first constructed by 
Michiku Kakutani \cite{kakutani} and von Neumann \cite{Neu,Pet73}, but this was not published. 
Rafael Chacon \cite{Chacon1966} first showed that a change of speed of any ergodic
flow could lead to weakly mixing flows. 
The number of classes of dynamical systems with with generic weak mixing 
has exploded. A generic set of measure preserving transformations
of a probability space are weakly mixing by a theorem of Katok and Stepin \cite{KaSt67}. 
For a new proof see \cite{Kni97}.

\paragraph{}
The work \cite{Anz51} was one of the earliest works on {\bf cocycle dynamics}
and includes examples, where $T \times A$ on $X \times \mathbb{Z}_2$ has 
purely singular continuous spectrum. 
A generic volume preserving homeomorphism of a compact metric space is weakly mixing by a result of 
Oxtoby-Ulam \cite{OxUl41}, enhanced by Halmos and Rohlin pushing ergodicity to mixing. 
A generic invariant measure of a shift is weakly mixing \cite{Kni97} (enhancing \cite{Par61}).
A generic $C^1$ diffeomorphism is either weakly mixing with zero Lyapunov exponent or then 
weakly mixing and partially hyperbolic. A generic symplectomorphism has this property too by the
last theorem of Man\'e \cite{Man83} which is now a theorem \cite{AvilaCrovisierWilkinson}.
A generic measure preserving differential equation on the $2$-torus is weakly mixing \cite{HoKn98}.
A generic permutation of a Hamiltonian system with a $(k \geq 2)$-dimensional KAM torus 
produces a weakly mixing invariant torus \cite{Kni99}.
A generic polygonal billiard is weakly mixing \cite{ChaikaForni} and
 \cite{GutkinKatok}. Remarkable is \cite{SabogalTroubetzkoy} showing that 
a Baire generic vertical-horizontal billiard is weakly mixing along a set of directions of full measure.
In \cite{Knill2000}, we asked whether a generic convex body in a square is 
weakly mixing. This is indeed the case: for strictly convex Sinai scatter bodies, 
one has no pure point spectrum by Pesin theory \cite{KatokStrelcyn}, for rational polygonal 
scatterers one has pure point spectrum. Rational polygonal bodies are dense in all convex bodies. 
One believes that a generic right angle triangle produces a weakly mixing,
non-mixing dynamics. Unproven is whether in the smooth case, there is a generic set of diffeomorphisms which 
have a weakly mixing invariant component of positive measure. The vague attractor of Kolmogrov (VAK) 
picture suggests that there should be ergodic mixing components of positive measure mixed with 
quasi-periodic motion. For a more recent list of problems \cite{Pesin2007}.

\bibliographystyle{plain}

\end{document}